\title{Regularity properties of optimal transportation problems arising in hedonic pricing models}
\author{Brendan Pass\footnote{Department of Mathematical and Statistical Sciences, 632 CAB, University of Alberta, Edmonton, Alberta, Canada, T6G 2G1 pass@ualberta.ca.}}
\begin{document}

\maketitle

\begin{abstract}
We study a form of optimal transportation surplus functions which arise in hedonic pricing models.  We derive a formula for the Ma-Trudinger-Wang curvature of these functions, yielding necessary and sufficient conditions for them to satisfy \textbf{(A3w)}.  We use this to give explicit new examples of surplus functions satisfying \textbf{(A3w)}, of the form $b(x,y)=H(x+y)$ where $H$ is a convex function on $\mathbb{R}^n$.  We also show that the space of equilibrium contracts in the hedonic pricing model has the maximal possible dimension, a result of potential economic interest.
\end{abstract}

\section{Introduction}
Fix Borel probability measures $\mu$ and $\nu$ on smooth manifolds $X$ and $Y$, respectively, and a smooth surplus function $b: X \times Y \rightarrow \mathbb{R}$.  Monge's optimal transportation problem is to find the map $F: X \rightarrow Y$, pushing $\mu$ forward to $\nu$, which maximizes the transportation surplus:

\begin{equation*}
\int_{X}b(x,F(x))d\mu.
\end{equation*}

Assuming some regularity on $\mu$ (say absolute continuity with respect to local coordinates), sufficient conditions on $b$ for the existence and uniqueness of the minimizer $F$ were found by Levin \cite{lev}, building on results of Gangbo and McCann \cite{gm}, Caffarelli \cite{Caf} and Gangbo \cite{g}.  The regularity, or smoothness, of $F$ is currently a very hot area of research.  Ma, Trudinger and Wang \cite{mtw} found a condition \textbf{(A3s)} on $b$ under which the optimizer must be smooth as long as the marginals $\mu$ and $\nu$ are smooth and bounded above and below.  Trudinger and Wang  then proved that a weaker version of this hypothesis, \textbf{(A3w)}, is in fact sufficient for the regularity of $F$ \cite{tw1}\cite{tw2} and Loeper showed that, even for rougher marginals, \textbf{(A3w)} is both necessary and sufficient for the continuity of $F$ \cite{loeper}.  This framework generalizes and unifies a series of earlier regularity results obtained by Caffarelli \cite{c1}\cite{c2}\cite{c2}, Urbas \cite{u}, Delanoe \cite{d1}\cite{d2} and Wang \cite{wang}.  Since then, many interesing results on regularity have been obtained; see, for example, \cite{liu}\cite{km}\cite{km2}\cite{loeper2}\cite{lv}\cite{fr}\cite{frv}\cite{frv2}\cite{fkm3}\cite{fkm2}\cite{frv3}\cite{k}. An interesting line of current research is to find examples of surplus functions satisfying \textbf{(A3w)} and \textbf{(A3s)}.

One main goal in the present paper is to determine when \textbf{(A3w)} holds for a particular class of surplus functions.  Explicitly, we are interested in functions of the form:

\begin{equation}\label{form}
 b(x,y) = \sup_{z \in Z} h(x,z) + g(y,z)
\end{equation}

Our motivation in studying surplus functions of this form comes partially from mathematical economics.  A recent paper by Chiappori, McCann and Nesheim \cite{cmn} demonstrated that finding an equilibrium in a certain hedonic pricing model is equivalent to solving an optimal transportation problem with a surplus function of this form (see also Ekeland \cite{e} \cite{e2} and Carlier and Ekeland \cite{ce} for another approach to this problem).  We briefy review this model now.  Imagine $X$ parameterizes a population of consumer types who are looking to buy some specific good (say houses).  The different directions in the manifold $X$ may represent various characteristics which differentiate among types (for example, age, income, family size, etc) and $d\mu(x)$ represents the relative frequency of types with characteristics $x \in X$.  Suppose now that $Y$ parameterizes a space of sellers looking to produce and sell the same good (say companies looking to build customized houses).  Again, the different directions in $Y$ represent characteristics differentiating seller types from each other (for example, the size of the company and the location of its headquarters) and $d\nu(y)$ their relative frequencies.  Suppose that $Z$ represents the space of available goods that can potentially be produced (for example, the collection of all houses that can feasibly be built, characterized by their size, location, etc.).  The space $Z$ is often referred to as the space of contracts. The functions $h(x,z)$ and $g(y,z)$ represent the preference of consumer $x$ to buy house $z$ and the preference of seller $y$ to build house $z$, respectively.  The result of Chiappori, McCann and Nesheim implies that the equilibrium coupling of buyers to sellers is also a solution to the optimal transportation problem with marginals $\mu$ and $\nu$ and surplus $b(x,y)$ given by equation (\ref{form}).  Despite their relevance in mathemtical economics, however, optimal transportation problems with surplus functions of this form do not seem to have been studied systematically in the literature. 

The \textbf{(A3w)} condition is that a certain tensor -- the Ma-Trudinger-Wang curvature, defined in the next section -- should be nonnegative on a certain set of vectors.  Our goal here is to find a formula for the Ma-Trudinger-Wang curvature of $b$ in terms of $g$ and $h$.  This will then yield necessary and sufficient conditions on $g$ and $h$ in order for $b$ to satisfy \textbf{(A3w)}. 

In particular, our work here will produce sufficient conditions on $g$ and $h$ which will ensure that the equilibrium assignment of sellers to buyers in the hedonic pricing problem is continuous.  In addition, this result should be of independent interest to mathematicians working in optimal transportation, as at present there are few known examples of surplus functions satisfying \textbf{(A3w)}.  Our result here yields new examples of such functions.  In particular, we show that for special choices of the preference functions $h$ and $g$, we obtain $b(x,y) = H(x+y)$, where $H$ is a convex function on $\mathbb{R}^n$.\footnote{Surplus functions of this form are reminiscent of a class of functions studied by Gangbo and McCann \cite{gm}, who were interested in minimizing the transportation cost for cost functions of the form $l(|x-y|)$, where $l \geq 0$ is strictly concave.}  Our work here yields a new formula for the Ma-Trudinger-Wang curvature in terms of the Legendre dual $H^*$ of $H$; this formula splits into two terms, one of which is always positive.

In a recent paper by Figalli, Kim and McCann, a variant of \textbf{(A3w)}, called \textbf{(B3w)}, emerged as a central concept in another type of economic problem \cite{fkm}.   This condition also asserts the positivity of Ma-Trudinger-Wang curvature, but on a larger set of vectors than \textbf{(A3w)}.  Let us mention here that, as we derive a formula for the Ma-Trudinger-Wang curvature, our method also yields necessary and sufficient conditions on $g$ and $h$ so that $b$ satisfies \textbf{(B3w)}.

In the next section, we recall various structural hypotheses of the function $b$ which arise in the regularity theory of optimal transportation, \textbf{(A3w)} and \textbf{(A3s)} being the most important.  In section 3, we derive conditions on $g$ and $h$ that ensure $b$ will satisfy these hypotheses.  In the final section, we study the structure of the set of contracts that gets signed in equilibrium; explicity, we prove that the dimension of this set is the same as the dimension of the support of the optimal measure.

It is a pleasure to thank Robert McCann for suggesting this problem to me and for useful discussions during the course of this work.
\section{Assumptions and defintions }

We will assume that the domains $X$, $Y$ and $Z$ can all be smoothly imbedded in larger manifolds, such that their closures $\overline{X}$, $\overline{Y}$ and $\overline{Z}$ are compact.  We will also make several assumptions on the preference functions $g$ and $h$.

\begin{enumerate}

\item The smooth manifolds $X$, $Y$, and $Z$ all share the same dimension, which we will denote by $n$.
\item $h \in C^{2}(\overline{X} \times \overline{Z})$ and $g \in C^{2}(\overline{Y} \times \overline{Z})$.
\item For each $(x,y)$ the supremum is attained by a unique $z(x,y) \in Z$.
\item For all $(x,y)$, the $n \times n$ matrix $D^2_{zz}h(x, z(x,y)) + D^2_{zz}g(y, z(x,y))$ is non-singular.

\end{enumerate}

Economically, $z(x,y)$ is the contract that maximizes the total utility of agents $x$ and $y$; if, in equilibrium, $x$ and $y$ choose to conduct business with each other, $z(x,y)$ is the contract they sign.

Assumption (1) on the dimensions is not entirely necessary.  Most of the literature on optimal transportation deals with equal dimensional spaces; one exception is a recent paper by the present author on the regularity of optimal transportation when the dimensions differ \cite{P}.  For economic applications, however, it may be desirable to allow these dimensions to differ.  The dimensions of $X$, $Y$ and $Z$ may represent the number of characteristics used in the model to differentiate among buyers, sellers and contracts, respectively, and it is certainly possible that these will not coincide.  For simplicity, we assume here that the dimensions are all equal, but note that it is straghtforward to extend the analysis to the case $\dim(X) \geq \dim(Z) \geq \dim(Y)$, using the extensions of the conditions \textbf{(A0)}-\textbf{(A3s)} found in \cite{P}.    
   
\newtheorem{twist}{Definition}[section]
\begin{twist}
We say $b$ is $(x,y)$-twisted if for all $x \in \overline{X}$, the mapping $y \mapsto D_xb(x,y)$ is injective on $\overline{Y}$.  We say $b$ is $(y,x)$-twisted if for all $y \in \overline{Y}$, the mapping $x \mapsto D_yb(x,y)$ is injective on $\overline{X}$.  We say $b$ is bi-twisted if it is both $(x,y)$ and $(y,x)$-twisted.
\end{twist}
 If $b$ is $(x,y)$-twisted, the map $y \mapsto D_xb(x,y)$ is invertible on its range and we will denote its inverse by $b$-exp$_{x}(\cdot)$.

\newtheorem{nondeg}[twist]{Definition}
\begin{nondeg}
We say $b$ is non-degenerate if for all $(x,y)$ the matrix of mixed, second order partials $D_{xy}^2b(x,y)$ is non-singular. 
\end{nondeg}

We will use analogous terminology for $g$ and $h$; for example, we will say that $g$ is $(y,z)$-twisted if $z \mapsto D_yg(y,z)$ is injective. 

The first three regularity conditions formulated by Ma, Trudinger and Wang are:

\begin{description}
	\item[(A0)] $b \in C^4(\overline{X} \times \overline{Y})$.
	\item[(A1)] $b$ is bi-twisted.
	\item[(A2)] $b$ is non-degenerate. 
\end{description}
 
For sufficiently regular marginals $\mu$ and $\nu$, \textbf{(A1)} implies the existence and uniqueness of a maximizer $F$, \cite{lev}\cite{g}\cite{gm}\cite{Caf}.  The condition \textbf{(A2)}, in turn, implies that the graph of $F$ is contained in an $n$-dimensional Lipschitz submanifold of the product $X \times Y$ \cite{mpw}.    

Our next definition concerns the structure of the domain $Y.$
\newtheorem{bcon}[twist]{Definition}
\begin{bcon}
We say $Y$ is $b$-convex if for all $x$ the set $D_xb(x,Y) \subseteq T_x^*X$ is convex. 
\end{bcon}

Ma, Trudinger and Wang showed that the $b$-convexity of $Y$ is necessary for the continuity of the map $F$ for arbitrary smooth marginals $\mu$ and $\nu$ \cite{mtw}.  Assuming this condition, as well as, \textbf{(A0)}-\textbf{(A2)} they showed that under, \textbf{(A3s)}, which we define below, the optimal map $F$ is smooth.  Loeper then showed that the weakening \textbf{(A3w)} of this condition is necessary and sufficient for the continuity of $F$ \cite{loeper}.

To formulate \textbf{(A3w)} and \textbf{(A3s)}, we will need the following definition.

\newtheorem{curv}[twist]{Definition}
\begin{curv}
 Assume \textbf{(A0)}-\textbf{(A2)} hold and that $Y$ is $b$-convex.  Let $x \in X$ and $y \in Y$.  Choose tangent vectors $v\in T_x X$ and $u \in T_yY$.  Set $q=D_xb(x,y) \in T_x^{*}X$ and $p = (D^2_{xy}b(x,y))\cdot u \in T_x^{*}X$.  For any smooth curve $\beta(s)$ in $X$ with $\beta(0)=x$ and $\frac{d\beta}{ds}(0)=v$, we define the Ma-Trudinger-Wang curvature of $b$ at $x$ and $y$, in the directions $v$ and $u$ by:
\begin{equation*}
 MTW^b_{(x,y)}\langle v,u\rangle : =\frac{\partial^4}{\partial s^2\partial t^2}b(\beta(s),b\text{-}exp_x(tp+q))
\end{equation*}
\end{curv}

A local coordinates expression for $MTW^b_{(x,y)}\langle v,u\rangle$ was first introduced by Ma, Trudinger and Wang \cite{mtw}.  The formulation above is due to Loeper \cite{loeper}, who showed that $MTW^b_{(x,y)}\langle v,u\rangle$ is invariant under smooth changes of coordinates and, when, $-b(x,y)$ is the quadratic cost on a Riemannian manifold, it is equal to the sectional curvature along the diagonal.  For general smooth surplus functions, Kim and McCann \cite{km} showed that it is the sectional curvature of certain null planes, corresponding to a certain pseudo-Riemannian metric.

We can now state the final regularity conditions:

\begin{description}
	\item[(A3w)] For all $(x,y) \in X \times Y$, $v\in T_x X$ and $u \in T_yY$ such that $v^T\cdot D^2_{xy}b\cdot u=0$ we have $MTW^b_{(x,y)}\langle v,u\rangle \geq 0.$
	\item[(A3s)] For all $(x,y) \in X \times Y$, $v\in T_x X$ and $u \in T_yY$ such that $v^T\cdot D^2_{xy}b\cdot u=0$ and $v, u \neq 0$ we have $MTW^b_{(x,y)}\langle v,u\rangle > 0.$
	
\end{description}
In subsequent sections, we will often refer to the curve $t \mapsto b\text{-}exp_x(tp+q) \in Y$ as a\textit{ $b$-segment} in $Y$.  We also note here that the conditions \textbf{(B3w)} and \textbf{(B3s)} found in \cite{fkm} are equivalent to \textbf{(A3w)} and \textbf{(A3s)}, respectively, without the orthogonality condition $v^T\cdot D^2_{xy}b\cdot u=0$.
\section{Regularity properties of $b$}
The aim of this section is to understand when $b$ satsifies certain regularity properties, namely \textbf{(A1)}, which ensures the existence and uniqueness of the optimal map $F$, \textbf{(A2)}, ensuring the rectifiability of the graph of $F$, $b$-convexity of $Y$ and \textbf{(A3w)}/\textbf{(A3s)}, governing the regularity of the optimal map.

First, we verify some simple facts.

\newtheorem{facts}{Lemma}[section]
\begin{facts}\label{facts}
 The map $z(x,y)$ is continuously differentiable and $b$ is $C^2$ smooth.  For all $x,y$ we have the following formulae:
\begin{eqnarray}
D_xb(x,y) &=& D_xh(x,z(x,y)) \label{b_x}\\
D_yb(x,y) &=& D_yg(y,z(x,y))\label{b_y} \\
 D_xz(x, y)&=&-M^{-1}(x, y)D^2_{zx}h(x,z) \label{z_x}\\
D_yz(x, y)&=&-M^{-1}(x, y)D^2_{zy}g(y,z)\label{z_y}\\
D^2_{xy}b(x,y)&=&-D^2_{xz}h(x,z)M^{-1}(x, y)D^2_{zy}g(y,z)\label{b_xy}
\end{eqnarray}
where $M(x,y):=D^2_{zz}h(x,z(x,y))+D^2_{zz}g(y,z(x,y))$
\end{facts}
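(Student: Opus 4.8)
The plan is to combine the envelope theorem with the implicit function theorem. First I would record the first-order condition for the maximization in (\ref{form}): by assumption (3) the supremum defining $b(x,y)$ is attained at a unique point $z(x,y)$ lying in the manifold $Z$ itself, hence at an \emph{interior} critical point of the smooth function $z \mapsto h(x,z) + g(y,z)$, so that
\[
D_zh(x,z(x,y)) + D_zg(y,z(x,y)) = 0 .
\]
I would then regard this as the equation $\Phi(x,y,z) = 0$, where $\Phi(x,y,z) := D_zh(x,z) + D_zg(y,z)$ is $C^1$ since $h, g \in C^2$, and where the partial derivative $D_z\Phi(x,y,z) = D^2_{zz}h(x,z) + D^2_{zz}g(y,z) = M(x,y)$ is invertible at $z = z(x,y)$ by assumption (4). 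The implicit function theorem then yields a local $C^1$ solution of $\Phi = 0$; by the global uniqueness in assumption (3) this local branch must coincide with $z(x,y)$, so $z(\cdot,\cdot)$ is continuously differentiable. Differentiating the identity $\Phi(x,y,z(x,y)) \equiv 0$ in $x$, respectively $y$, and solving for the derivative of $z$ using the invertibility of $M$ gives (\ref{z_x}) and (\ref{z_y}).

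For the formulae for the derivatives of $b$, I would start from $b(x,y) = h(x,z(x,y)) + g(y,z(x,y))$ and differentiate in $x$ by the chain rule:
\[
D_xb(x,y) = D_xh(x,z(x,y)) + \big(D_zh(x,z(x,y)) + D_zg(y,z(x,y))\big)\cdot D_xz(x,y),
\]
and the bracketed term vanishes by the first-order condition, yielding (\ref{b_x}); the identity (\ref{b_y}) follows by the symmetric computation in $y$. Since the right-hand side of (\ref{b_x}) is the composition of the $C^1$ map $(x,y) \mapsto (x, z(x,y))$ with the $C^1$ map $D_xh$, it is $C^1$ in $(x,y)$, so $b \in C^2$. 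Differentiating (\ref{b_x}) with respect to $y$ and substituting (\ref{z_y}) then gives
\[
D^2_{xy}b(x,y) = D^2_{xz}h(x,z(x,y))\cdot D_yz(x,y) = -D^2_{xz}h(x,z)\,M^{-1}(x,y)\,D^2_{zy}g(y,z),
\]
which is (\ref{b_xy}).

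The only genuinely delicate point is the transition from the pointwise existence and uniqueness of the maximizer to the regularity of the selection $z(x,y)$: one must check that the local $C^1$ branch produced by the implicit function theorem is exactly the globally defined map $z(x,y)$, which is where the uniqueness in assumption (3) is used, and that $z(x,y) \in Z$ (rather than on $\overline{Z} \setminus Z$) is what legitimizes the vanishing first-order condition with no boundary contribution. Everything after that is routine differentiation, and the regularity assertions are sharp for $h, g \in C^2$, so no bootstrapping beyond $b \in C^2$ is available or required here.
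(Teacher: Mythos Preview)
Your proposal is correct and follows essentially the same route as the paper: first-order condition at the maximizer, implicit function theorem via the nondegeneracy of $M$, then differentiation of the resulting identities. The only cosmetic difference is in deriving (\ref{b_x})--(\ref{b_y}): the paper argues that $b(x,y) - h(x,z) - g(y,z) \geq 0$ with equality at $z = z(x,y)$ and reads off the $x$- and $y$-derivatives from that minimum, whereas you differentiate $b(x,y) = h(x,z(x,y)) + g(y,z(x,y))$ directly and kill the extra chain-rule term with the first-order condition---these are two phrasings of the same envelope computation.
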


\begin{proof}
Note that, as $z(x,y)$ maximizes $z \mapsto g(x,z) + h(y,z)$, we have 

\begin{equation}
D_zg(x,z(x,y)) + D_zh(y,z(x,y)) = 0 \label{D_z}
\end{equation}
As $D^2_{zz}g(x, z(x,y)) +D^2_{zz}h(y, z(x,y))$ is non-singular by assumption, the Implicit Function Theorem now implies that $z(x,y)$ is $C^1$.  This, in turn, implies that $b(x,y) = g(x,z(x,y)) + h(y,z(x,y))$ is at least $C^1$.  Now note that $b(x,y) - g(x,z) +h(y,z) \geq 0$, for all $z$, with equality when $z=z(x,y)$, which implies $(\ref{b_x})$ and $(\ref{b_y})$.  Differentiating $(\ref{D_z})$ with respect to $x$ and $y$, respectively, yields $(\ref{z_x})$ and $(\ref{z_y})$.  Differentiating ($\ref{b_x}$) with respect to $y$ and using ($\ref{z_y}$) yields ($\ref{b_xy}$).
\end{proof}

We will first prove results about $\textbf{(A0)} - \textbf{(A2)}$.  These proofs are based on similar arguments, found in \cite{cmn} and \cite{P}; we recreate them here for the reader's convenience.

\newtheorem{smooth}[facts]{Corollary}
\begin{smooth}
If $h$ and $g$ satisfy $\textbf{(A0)}$, then $b$ satisfies $\textbf{(A0)}$ as well.
\end{smooth}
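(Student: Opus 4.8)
The plan is to upgrade the regularity of the optimal contract map $z(x,y)$ from $C^1$, as already established in Lemma \ref{facts}, to $C^3$, and then to exploit the envelope formulae \eqref{b_x} and \eqref{b_y} to gain one further derivative for $b$ itself. The key observation is that differentiating $b(x,y)=h(x,z(x,y))+g(y,z(x,y))$ naively would only give $b\in C^3$, so the envelope structure is essential.

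First I would revisit the Implicit Function Theorem step in the proof of Lemma \ref{facts}. Under \textbf{(A0)} for $h$ and $g$, the map $(x,y,z)\mapsto D_zh(x,z)+D_zg(y,z)$ is $C^3$, and it vanishes at $(x,y,z(x,y))$ by the first-order condition \eqref{D_z}. Its derivative with respect to $z$ is $M(x,y)=D^2_{zz}h+D^2_{zz}g$, which is non-singular by Assumption (4). The $C^3$ version of the Implicit Function Theorem then yields $z\in C^3(\overline{X}\times\overline{Y})$; the assumed compactness and smooth embeddings of the domains ensure there is no difficulty near the boundary.

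Next, instead of differentiating the expression for $b$ directly, I would invoke the envelope identities \eqref{b_x} and \eqref{b_y}: $D_xb(x,y)=D_xh(x,z(x,y))$ and $D_yb(x,y)=D_yg(y,z(x,y))$. Since $h,g\in C^4$, the maps $D_xh$ and $D_yg$ are $C^3$, and composing them with the $C^3$ map $z(x,y)$ shows that every first-order partial derivative of $b$ is $C^3$. A function all of whose first-order partials lie in $C^3$ is itself $C^4$, so $b\in C^4(\overline{X}\times\overline{Y})$, which is exactly \textbf{(A0)}.

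I do not expect a serious obstacle here; the only point requiring care is precisely the ``gain of one derivative'', i.e.\ recognizing that the naive composition bound $b\in C^3$ is not sharp and that the envelope formulae \eqref{b_x}--\eqref{b_y} are what push the regularity up to $C^4$. (The same bootstrapping argument shows more generally that $h,g\in C^{k}$ forces $b\in C^{k}$ for every $k\ge 2$.)
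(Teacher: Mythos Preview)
Your proof is correct and follows essentially the same route as the paper: apply the Implicit Function Theorem to \eqref{D_z} to upgrade $z(x,y)$ to $C^3$, then use the envelope identities \eqref{b_x} and \eqref{b_y} with the chain rule to see that all first partials of $b$ are $C^3$, hence $b\in C^4$. Your explicit remark that the envelope structure is what gains the extra derivative (versus the naive $C^3$ bound) is exactly the point, and your $C^k$ bootstrap observation is a natural addendum.
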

\begin{proof}
Using (\ref{D_z}) and the Implicit Function Theorem we find that $z(x,y)$ is $C^3$; equations (\ref{b_x}) and (\ref{b_y}) together with the chain rule now imply the desired result. 
\end{proof}

\newtheorem{ngt}[facts]{Proposition}
\begin{ngt}
If $h$ and $g$ are non-degenerate, then $b$ is non-degenerate.  If $h$ is $(x,z)$-twisted and $g$ is $(z,y)$-twisted, then $b$ is $(x,y)$-twisted.
\end{ngt}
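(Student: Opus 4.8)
The plan is to read both assertions directly off the formulae established in Lemma~\ref{facts}. For non-degeneracy, recall from (\ref{b_xy}) that
\[
D^2_{xy}b(x,y) = -\,D^2_{xz}h(x,z)\,M^{-1}(x,y)\,D^2_{zy}g(y,z),
\]
a product of three $n\times n$ matrices. The middle factor $M^{-1}(x,y)$ is invertible by Assumption~(4) (this is precisely the content of the hypothesis that $M(x,y)$ is non-singular). Non-degeneracy of $h$ says $D^2_{xz}h$ is non-singular, and non-degeneracy of $g$ says $D^2_{yz}g$, hence its transpose $D^2_{zy}g$, is non-singular. A product of invertible matrices is invertible, so $D^2_{xy}b(x,y)$ is non-singular at every $(x,y)$, which is exactly non-degeneracy of $b$.

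For the twist statement, I would fix $x\in\overline X$ and suppose $D_xb(x,y_1)=D_xb(x,y_2)$, aiming to conclude $y_1=y_2$. By (\ref{b_x}) this reads $D_xh(x,z(x,y_1))=D_xh(x,z(x,y_2))$; since $h$ is $(x,z)$-twisted, $z\mapsto D_xh(x,z)$ is injective, so $z(x,y_1)=z(x,y_2)=:z$. It remains to recover $y$ from the pair $(x,z)$: the first-order optimality condition (\ref{D_z}) gives $D_zg(y_i,z)+D_zh(x,z)=0$ for $i=1,2$, so $D_zg(y_1,z)=D_zg(y_2,z)$, and since $g$ is $(z,y)$-twisted the map $y\mapsto D_zg(y,z)$ is injective, forcing $y_1=y_2$. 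Hence $y\mapsto D_xb(x,y)$ is injective on $\overline Y$, i.e. $b$ is $(x,y)$-twisted.

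The argument is essentially bookkeeping once Lemma~\ref{facts} is available; there is no serious obstacle, but the one point deserving care is that injectivity of $y\mapsto D_xb(x,y)$ factors through the composition $y\mapsto z(x,y)\mapsto D_xh(x,z)$, so one genuinely needs \emph{both} hypotheses: the $(x,z)$-twist of $h$ to invert the outer map, and the $(z,y)$-twist of $g$ to invert the inner map $y\mapsto z(x,y)$ (the latter via the stationarity condition (\ref{D_z}), which also underlies the implicit-function description of $z(x,y)$). By symmetry, using (\ref{b_y}) in place of (\ref{b_x}) one obtains the analogous statement for $(y,x)$-twistedness, which I would mention in passing even though it is not part of the present claim.
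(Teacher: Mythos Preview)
Your proof is correct and follows essentially the same approach as the paper: non-degeneracy is read off from the factorization (\ref{b_xy}), and the twist statement is obtained by first using (\ref{b_x}) together with the $(x,z)$-twist of $h$ to deduce $z(x,y_1)=z(x,y_2)$, and then the stationarity condition (\ref{D_z}) together with the $(z,y)$-twist of $g$ to conclude $y_1=y_2$. Your write-up is in fact slightly cleaner than the paper's, which contains some $g$/$h$ label swaps in the proof.
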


\begin{proof}
The non-degeneracy part of the proposition follows immediately from $(\ref{b_xy})$.  

Assume that $h$ is $(x,z)$ twisted and $g$ is $(z,y)$ twisted.  Suppose we have $D_xb(x,y_0) = D_xb(x,y_1)$; we need to show $y_0 =y_1$.  By $(\ref{b_x})$, we have $D_xg(x,z(x,y_0)) = D_xg(x,z(x,y_1))$ and so by the twistedness of $g$ we have $z(x,y_0) =z(x,y_1)$.  Now, for $i=0,1$ we have $D_zg(x,z(x,y_i)) + D_zh(y_i,z(x,y_i)) = 0$ by \ref{D_z}.  Therefore, as $z(x,y_0) =z(x,y_1)$,
\begin{equation*}
D_zh(y_0,z(x,y_0)) = -D_zg(x,z(x,y_0)) = - D_zg(x,z(x,y_1))=D_zh(y_1,z(x,y_1))
\end{equation*} 

Again using $z(x,y_0) =z(x,y_1)$, the equality $D_zh(y_0,z(x,y_0)) = D_zh(y_1,z(x,y_1))$ and the twistedness of $h$ imply that $y_0 = y_1$ as desired.
\end{proof}

Of course, an analagous result holds if $h$ is $(z,x)$-twisted and $g$ is $(y,z)$-twisted and so we immediately obtain the following.

\newtheorem{a1a2}[facts]{Corollary}
\begin{a1a2}
If $h$ and $g$ satsify $\textbf{(A1)}$, then so does $b$.  If $h$ and $g$ satisfy $\textbf{(A2)}$, then so does $b$.
\end{a1a2}

Next, we consider the $b$-convexity condition.

\newtheorem{bcon2}[facts]{Proposition}
\begin{bcon2}
Assume $Z$ is $h$-convex and for all $h$-segments $z_t$ at $x$,  $-D_zh(x,z_t)$ is in the domain of $g-exp_{z_t}(\cdot)$.  Then the domain $Y$ is $b$-convex.
\end{bcon2}
 \begin{proof}
Let $D_xb(x,y_i)=p_i$, for $i=0,1$.  For all $t \in [0,1],$ we must show that there is some $y_t \in Y$ such that $D_xb(x,y_t)=t(p_1-p_0)+p_0:=p_t$.  Let $z_i=z(x,y_i)$; then $D_xh(x,z_i)=D_xb(x,y_i)=p_i$ for $i=0,1$, by \ref{b_x} in Lemma \ref{facts}.  The given conditions imply the existence of a $z_t$ such that $D_xh(x,z_t)=p_t$ and a $y_t$ such that $D_zh(x,z_t)+D_zg(y_t,z_t)=0$.  Therefore, $z_t =z(x,y_t)$, and so $D_xb(x,y_t)=D_xh(x,z_t)=p_t$, as desired.
 \end{proof}
 
We now provide a formula for the Ma-Trudinger-Wang curvature in terms of $g$ and $h$.
\newtheorem{mtw}[facts]{Theorem}
\begin{mtw}\label{mtwgh}
 Let $x \in X$, $z \in Z$, $v \in T_xX$ and $z_t$ be a $b$-segment at $x$.  Set $y_t =g$-$exp_{z_t}(-D_zh(x,z_t))$.  Then, for $u=\dot{y_t}$, $MTW^b_{(x,y_0)}\langle v,u\rangle$  is given by

\begin{equation}\label{cc}
\frac{d^2}{dt^2}\Big|_{t=0} v^T\cdot \Big( D^2_{xx}h(x,z_t) - D^2_{xz}h(x,z_t) \cdot \big( D^2_{zz}h(x,z_t) + D^2_{zz}g(y_t,z_t) \big )^{-1}\cdot D^2_{zx}h(x,z_t) \Big) \cdot v
\end{equation}
\end{mtw}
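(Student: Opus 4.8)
The plan is to compute the fourth-order derivative in the definition of the Ma-Trudinger-Wang curvature directly, taking advantage of the fact that along a $b$-segment the gradient $t\mapsto D_xb(x,y_t)$ is affine, so that most terms vanish.

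\emph{Step 1: identifying $y_t$ with the $b$-segment $b\text{-}exp_x(tp+q)$.} Since $y_t=g\text{-}exp_{z_t}(-D_zh(x,z_t))$, we have $D_zh(x,z_t)+D_zg(y_t,z_t)=0$, which is exactly the first-order condition \eqref{D_z} characterizing $z(x,y_t)$; by the uniqueness in Assumption (3), $z(x,y_t)=z_t$. Consequently \eqref{b_x} gives $D_xb(x,y_t)=D_xh(x,z_t)$, which is affine in $t$ because $z_t$ is a $b$-segment at $x$. Writing $q:=D_xb(x,y_0)$ and $p:=D^2_{xy}b(x,y_0)\cdot u$ with $u=\dot y_0$, and evaluating $D_xb(x,y_t)=D_xh(x,z_t)$ and its $t$-derivative at $t=0$, we obtain $D_xb(x,y_t)=tp+q$ for all $t$. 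By \textbf{(A1)} the map $y\mapsto D_xb(x,y)$ is injective, so $y_t=b\text{-}exp_x(tp+q)$; hence $b(\beta(s),b\text{-}exp_x(tp+q))=b(\beta(s),y_t)$.

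\emph{Step 2: reducing the fourth derivative.} Put $\Phi(s,t):=b(\beta(s),y_t)$. Since $b\in C^4$ by \textbf{(A0)}, the order of differentiation is immaterial, and we first differentiate twice in $s$ with $t$ fixed. A direct chain-rule computation in local coordinates about $x$ gives
\[
\partial_s^2\Phi(0,t)=v^T\cdot D^2_{xx}b(x,y_t)\cdot v+D_xb(x,y_t)\cdot\ddot\beta(0).
\]
By Step 1, $D_xb(x,y_t)=tp+q$ is affine in $t$, so the second summand is affine in $t$ and is annihilated by $\partial_t^2|_{t=0}$. Therefore
\[
MTW^b_{(x,y_0)}\langle v,u\rangle=\frac{d^2}{dt^2}\Big|_{t=0}\,v^T\cdot D^2_{xx}b(x,y_t)\cdot v.
\]

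\emph{Step 3: computing $D^2_{xx}b$.} Differentiating \eqref{b_x} in $x$ and inserting \eqref{z_x} from Lemma \ref{facts} yields, with $z=z(x,y)$,
\[
D^2_{xx}b(x,y)=D^2_{xx}h(x,z)-D^2_{xz}h(x,z)\cdot\bigl(D^2_{zz}h(x,z)+D^2_{zz}g(y,z)\bigr)^{-1}\cdot D^2_{zx}h(x,z).
\]
Substituting $y=y_t$ and $z(x,y_t)=z_t$ into this identity and then into the previous display produces exactly \eqref{cc}.

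The main obstacle is Step 1: one must correctly match $y_t$ with $b\text{-}exp_x(tp+q)$ and verify that the initial value and the velocity of $t\mapsto D_xb(x,y_t)$ are precisely $q$ and $p$. Once this identification is secured, the remaining two steps are short, the key mechanism being the cancellation of the $\ddot\beta(0)$ term coming from the affineness of the $b$-segment's gradient.
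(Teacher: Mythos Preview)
Your proof is correct and follows essentially the same approach as the paper: Step~2 is exactly the content of the paper's auxiliary Lemma (the computation showing $MTW^b_{(x,y_0)}\langle v,u\rangle=\frac{d^2}{dt^2}\big|_{t=0}v^TD^2_{xx}b(x,y_t)v$), and Step~3 is precisely the paper's proof of the theorem itself, obtained by differentiating \eqref{b_x} in $x$ and substituting \eqref{z_x}. Your Step~1, verifying explicitly that $z(x,y_t)=z_t$ and hence that $y_t$ is the $b$-segment $b\text{-}exp_x(tp+q)$, is a useful clarification that the paper handles more tersely (it simply asserts $z(x,y_t)=z_t$ without spelling out the argument).
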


To prove this result, we will need a simple lemma about the Ma-Trudinger-Wang curvature.  This lemma is well known, and the resulting formula for the Ma-Trudinger-Wang curvature appears in, for example, \cite{k}, but it does not seem to be proven explicitly in the literature, so we provide a proof below.

\newtheorem{crosscurv}[facts]{Lemma}
\begin{crosscurv}
Let $y_t$ be a $b$-segment at $x$: $D_xb(x,y_t) =tp+q$.  Then, for $u=\dot{y_t}$,
\begin{equation*}
MTW^b_{(x,y_0)}\langle v,u \rangle =\frac{d^2}{dt^2}\Big|_{t=0} v^T\cdot D^2_{xx}b(x,y_t) \cdot v
\end{equation*}  
\end{crosscurv}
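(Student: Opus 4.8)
The plan is to unwind the definition of the Ma-Trudinger-Wang curvature and carry out the two $s$-derivatives first. Put $\Phi(s,t) := b(\beta(s),y_t)$, where $\beta$ is any curve in $X$ with $\beta(0)=x$ and $\dot\beta(0)=v$; since $D_xb(x,y_t)=tp+q$ we have $y_t = b\text{-}exp_x(tp+q)$, and differentiating this relation in $t$ at $t=0$ gives $D^2_{xy}b(x,y_0)\cdot\dot y_0 = p$, so that $u=\dot y_0$ matches the data appearing in the definition of $MTW^b$. Thus, directly from that definition, $MTW^b_{(x,y_0)}\langle v,u\rangle = \frac{\partial^4}{\partial s^2\partial t^2}\Phi(0,0)$. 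Because $b\in C^4$ by \textbf{(A0)} and $t\mapsto y_t$ is $C^3$ (it is the image of an affine curve under the $C^3$ map $b\text{-}exp_x$, whose regularity comes from the inverse function theorem together with the invertibility of $D^2_{xy}b$ guaranteed by \textbf{(A2)}), Clairaut's theorem lets us compute this mixed partial by differentiating twice in $s$ first and then twice in $t$.

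The next step is to evaluate $\frac{\partial^2}{\partial s^2}\Big|_{s=0}\Phi(s,t)$ in local coordinates about $x$. By the chain rule,
\begin{equation*}
\frac{\partial^2}{\partial s^2}\Big|_{s=0}\Phi(s,t) = v^T\cdot D^2_{xx}b(x,y_t)\cdot v + D_xb(x,y_t)\cdot\ddot\beta(0).
\end{equation*}
Now I invoke the defining property of a $b$-segment: $D_xb(x,y_t) = tp+q$ depends \emph{affinely} on $t$. Hence the second summand, $(tp+q)\cdot\ddot\beta(0)$, is an affine function of $t$ and is annihilated by $\frac{\partial^2}{\partial t^2}$.

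Applying $\frac{\partial^2}{\partial t^2}\Big|_{t=0}$ to the displayed identity therefore gives
\begin{equation*}
MTW^b_{(x,y_0)}\langle v,u\rangle = \frac{d^2}{dt^2}\Big|_{t=0} v^T\cdot D^2_{xx}b(x,y_t)\cdot v,
\end{equation*}
as claimed. As a byproduct the right-hand side is manifestly independent of the choice of $\beta$, in accordance with Loeper's invariance result; conversely, one could shortcut the computation by choosing $\beta$ with $\ddot\beta(0)=0$ in the given coordinates and appealing to that invariance, but the argument above keeps the lemma self-contained. There is no genuine difficulty here: the only points requiring care are the regularity needed to interchange the order of differentiation and the bookkeeping of the first-order term in $\frac{\partial^2}{\partial s^2}\Phi$, which is precisely the term that the $b$-segment hypothesis kills.
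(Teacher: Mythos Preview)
Your proof is correct and follows essentially the same approach as the paper's own argument: differentiate $b(\beta(s),y_t)$ twice in $s$ via the chain rule to obtain $v^T D^2_{xx}b(x,y_t)v + D_xb(x,y_t)\cdot\ddot\beta(0)$, then observe that the $b$-segment condition makes the second summand affine in $t$, so it is annihilated by $\partial^2/\partial t^2$. Your version is slightly more explicit about the regularity justifying the interchange of partials, but the substance is identical.
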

\begin{proof}
Letting $x_s$ be a curve such that $x_0 =x$ and $\dot{x_0} = v$, we have

\begin{eqnarray*}
MTW^b_{(x,y_0)}\langle v,u \rangle &=& \frac{d^4}{dt^2ds^2}\Big|_{t=0,s=0} b(x_s,y_t)\\
&=&\frac{d^3}{dt^2ds}\Big|_{t=0,s=0} D_xb(x_s,y_t)\cdot \dot{x_s}\\
&=&\frac{d^2}{dt^2}\Big|_{t=0,s=0} (\dot{x_s}^T\cdot D^2_{xx}b(x_s,y_t)\cdot \dot{x_s}+D_xb(x_s,y_t)\cdot \ddot{x_s})\\
&=&\frac{d^2}{dt^2}\Big|_{t=0} (v^T\cdot D^2_{xx}b(x,y_t)\cdot v)+\frac{d^2}{dt^2}\Big|_{t=0}(D_xb(x,y_t)\cdot\ddot{x_0})\\
&=&\frac{d^2}{dt^2}\Big|_{t=0} (v^T\cdot D^2_{xx}b(x,y_t)\cdot v)+\frac{d^2}{dt^2}\Big|_{t=0}((pt+q)\cdot\ddot{x_0})\\
&=&\frac{d^2}{dt^2}\Big|_{t=0} (v^T\cdot D^2_{xx}b(x,y_t)\cdot v)
\end{eqnarray*}

\end{proof}

We can now prove Proposition \ref{mtwgh}.

\begin{proof}
In light of the preceding lemma, it suffices to show:

\begin{equation*}
D^2_{xx}b(x,y_t) =  D^2_{xx}h(x,z_t) - D^2_{xz}h(x,z_t) \cdot \big( D^2_{zz}h(x,z_t) + D^2_{zz}g(y_t,z_t) \big )^{-1}\cdot D^2_{zx}h(x,z_t) \end{equation*}
By Lemma \ref{facts},
\begin{equation*}
D_xb(x,y) = D_xh(x,z(x,y))
\end{equation*}

Differentiating this equation with respect to $x$, noting that $z(x,y_t) =z_t$ and using the formula in Lemma \ref{facts} for $D_xz(x,y)$ yields the desired result.
\end{proof}

Equation (\ref{cc}) for the Ma-Trudinger-Wang curvature naturally splits into two terms:

\begin{equation*}
MTW^b_{(x,y_0)}\langle v,u \rangle = A + B
\end{equation*}

where 

\begin{equation*}
A = \frac{d^2}{dt^2}\Big|_{t=0} v^T \cdot \Big( D^2_{xx}h(x,z_t)\Big) \cdot v, \text{  } B =-\frac{d^2}{dt^2}\Big|_{t=0} v_t^T\cdot(M_t)^{-1}\cdot v_t
\end{equation*}
Here $M_t = D^2_{zz}h(x,z_t) + D^2_{zz}g(y_t,z_t)$, and  $v_t = D^2_{zx}h(x,z_t) \cdot v$.  The first term, $A$, is exactly the Ma-Trudinger-Wang curvature of $h$.  The second term $B$, can be further refined using the chain rule and the formulae for the derivatives of the matrix $M_t^{-1}$:
 
 \begin{equation*}
 \dot{(M_t^{-1})} = -M_t^{-1}\dot{M_t}M_t^{-1}, \text{ } \ddot{(M_t^{-1})} = -M_t^{-1}\ddot{M_t}M_t^{-1} + 2M_t^{-1}\dot{M_t}M_t^{-1}\dot{M_t}M_t^{-1} 
 \end{equation*}
Differentiating $v_t^T\cdot(M_t)^{-1}\cdot v_t$ twice, applying these formulae and using the symmetry of $M_t$ and its derivatives yields:
 
 \begin{eqnarray}\label{B}
 B &=&\Big(-2 \ddot{v_t}^T \cdot M_t^{-1}\cdot v_t - 2 \dot{v_t}^T\cdot M_t^{-1}\cdot\dot{v_t} +4 \dot{v_t}^T\cdot M_t^{-1}\dot{M_t}M_t^{-1}\cdot v_t +  \nonumber \\ 
 &+&v_t^T\cdot M_t^{-1}\ddot{M_t}M_t^{-1}\cdot v_t  -  2v_t^T\cdot M_t^{-1}\dot{M_t}M_t^{-1}\dot{M_t}M_t^{-1}\cdot v_t\Big)|_{t=0}
 \end{eqnarray}

Now, note that the maximality of $z \mapsto h(x_t,z) + g(y_t,z)$ at $z_t= z(x,y_t)$ implies that $M_t$, the second derivative of this map, is negative semi-definite.  Assumption 4 at the beginning of section 2 asserts that $M_t$ is non-singular, and therefore it is negative-definite.  The second and last terms in (\ref{B}) are therefore non-negative, due to the negative definiteness of $M_t^{-1}$ and the symmetry of $M_t^{-1}$ and $\dot{M_t}$.

As the following examples shows, for appropriate forms of the functions $h$ and $g$, $b(x,y)=H(x+y)$ becomes an arbitrary convex function of the sum.  Understanding when functions of this form have positive cross curvature is interesting in its own right.  In this setting, the present approach --- essentially doing calculations on $H^*$ rather than $H$ --- has a distinct advantage; $b$-segments for $H$ correspond to ordinary line segments for the dual variables.  That is, instead of evaluating $H$ along a $b$-segment, we evaluate $H^*$ along a line.  

\newtheorem{ex2}[facts]{Example}
\begin{ex2}
Take $X=Y=Z =\mathbb{R}^n$ and set $h(x,z) =x\cdot z -H^*(z)$, where $H^*$ is the Legendre transform of some smooth, uniformly convex function $H:\mathbb{R}^n \rightarrow \mathbb{R}$, and $g(y,z)=y \cdot z$.  Then $b(x,y)=H(x+y)$.  Then $MTW^b_{(x,y)}(v,u)$ is given by:

\begin{equation}
\sum_{i,j,k,l}-\frac{\partial^4 H^*}{\partial z^i\partial z^j\partial z^k\partial z^l} p_k p_lw_i w_j+2\sum_{i,j,k,l,a,r}\frac{\partial^3 H^*}{\partial z^i\partial z^l\partial z^k}\frac{\partial^3 H^*}{\partial z^r\partial z^j\partial z^a}\frac{\partial^2 H}{\partial z^l\partial z^r}w_i w_j p_a p_k  \label{mtwH}
\end{equation}
where $p=D^2H(x+y)\cdot u$, $w = D^2H(x+y)\cdot v$.
\end{ex2}

\begin{proof}

The curve $y_t$ is given by $DH(x+y_t)=tp+q$, or $y_t = DH^*(tp+q) - x$, and $z_t$ is given by $z_t=D_xh(x,z_t) = tp +q$.
Note that $A =0$, as $D_{xx}^2h(x,z_t) =0$.  Turning to $B$, note that as $D^2_{xz}h(x,z_t) =I$, we have $v_t=v$, and $M_t =-D^2H^*(tp+q)$, so that $\dot{M_t} = -D^3H^*(tp+q)\cdot p$, or, in matrix notation,  $(\dot{M_t})_{ij} =-\sum_{k}\frac{\partial^3 H^*}{\partial z^i\partial z^j\partial z^k}p_k$.  Similarly, $(\ddot{M_t})_{ij} = -\sum_{k,l}\frac{\partial^4 H^*}{\partial z^i\partial z^j\partial z^k\partial z^l} p_k p_l$.  

Now, the first three terms in (\ref{B}) vanish. The final two terms are exactly the desired expression. 
\end{proof}
The formula (\ref{mtwH}) for the Ma-Trudinger-Wang curvature does not seem much simpler than the original formula of Ma, Trudinger and Wang \cite{mtw}, but it has one important advantage; the second term is always non-negative, because of the symmetry of mixed partials and the positive definitness of $D^2H$.  We obtain immediately, for example, that on any domain where the fourth order derivatives of $H^*$ are small compared to its third order derivatives and the second order derivatves of $H$, $b$ satisfies \textbf{(A3s)} (in fact, it satisfies the stronger condition \textbf{(B3s)}).

\section{Hausdorff dimension of the space of contracts}
Here we study the structure of the distribution of contracts that get signed in equilibrium in the hedonic pricing problem.  As in previous sections, we will assume for simplicity that $\dim(X) = \dim(Y) =\dim(Z) :=n$, but remark that the results may be extended to the case where   $\dim(X) \geq \dim(Z) \geq \dim(Y)$ in a straightforward way.

Assuming that $\mu$ assigns zero mass to every set of Hausdorff dimension less than or equal to $n-1$, the twist condition implies the existence of a unique map $F: X \rightarrow Y$ solving Monge's optimal transportation problem; as $b$ is $C^2$, this map is differentiable almost everywhere.  In economic terms, this means that, in equilibrium almost every agent $x$ conducts business with a unique agent $y := F(x)$; they sign the contract $z(x,F(x))$.  We define the distribution of signed contracts $\mu_Z$ to be the push forward of $\mu$ by the map $x \mapsto z(x,F(x))$;  our goal here is to investigate the structure of this measure.   Economically, the support of this measure can be interpreted as the set of all contracts that are executed in equilibrium, while $d\mu_Z(z)$ represents the relative frequency of contracts that are executed in equilibrium.  Our result below implies that this set has the maximal possible dimension.

\newtheorem{invert}{Proposition}[section]
\begin{invert}
Assume that $h$ is $(x,z)$-twisted, $g$ is $(z,y)$-twisted and that both $g$ and $h$ are non-degenerate.  At any point where the map $x \mapsto z(x,F(x))$ is differentiable, it's derivative has full rank.
\end{invert}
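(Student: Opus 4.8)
The plan is to reduce the full-rank statement to the invertibility of three linear maps: the derivative $D_xF$ of the optimal map (which is controlled by the non-degeneracy of $b$, i.e. \textbf{(A2)}), and the two ``partial'' derivatives of $z(x,y)$ in $x$ and in $y$, whose structure is pinned down by formulae (\ref{z_x}) and (\ref{z_y}) in Lemma \ref{facts}. Concretely, at a point of differentiability, write $G(x) := z(x,F(x))$, so by the chain rule
\begin{equation*}
D_xG = D_xz(x,F(x)) + D_yz(x,F(x))\cdot D_xF .
\end{equation*}
Using (\ref{z_x}) and (\ref{z_y}), this becomes
\begin{equation*}
D_xG = -M^{-1}\Big( D^2_{zx}h(x,z) + D^2_{zy}g(y,z)\cdot D_xF \Big),
\end{equation*}
where $M = D^2_{zz}h(x,z) + D^2_{zz}g(y,z)$ is invertible by Assumption 4. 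Since $M^{-1}$ is an isomorphism, $D_xG$ has full rank if and only if the matrix $N := D^2_{zx}h(x,z) + D^2_{zy}g(y,z)\cdot D_xF$ is non-singular.

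Next I would bring in the first-order optimality relation between $F$ and the $c$-transform structure. At a point where $F$ is differentiable, the standard first-order condition for optimal transport gives $D_yb(F^{-1}(y),y)$-type identities; more usefully, differentiating the relation $D_yb(x,F(x)) = D\psi(F(x))$ (for the Kantorovich potential $\psi$, with $F = b\text{-}\exp$ of its gradient) shows that $D^2_{yx}b(x,F(x)) + D^2_{yy}b(x,F(x))\cdot D_xF = D^2\psi(F(x))\cdot D_xF$, hence
\begin{equation*}
D^2_{xy}b \cdot D_xF = \big(D^2\psi(F(x)) - D^2_{yy}b\big)\cdot D_xF
\end{equation*}
is a \emph{symmetric} matrix, and in particular $D^2_{xy}b\cdot D_xF$ is symmetric. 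Combined with the non-degeneracy of $b$ (so $D^2_{xy}b$ is invertible), this makes $D_xF$ expressible as $(D^2_{xy}b)^{-1}$ times a symmetric matrix. I would then substitute the factorization (\ref{b_xy}), namely $D^2_{xy}b = -D^2_{xz}h\cdot M^{-1}\cdot D^2_{zy}g$, which together with the non-degeneracy of $h$ and $g$ (each of $D^2_{xz}h$ and $D^2_{zy}g$ is invertible) lets me solve for $D^2_{zy}g\cdot D_xF$ cleanly. Plugging this back into $N$ I expect to get, after cancellation, an expression of the form $N = D^2_{zx}h \cdot(\text{something invertible})$, or more precisely $N$ conjugate to a symmetric matrix arising from the second variation of the dual problem, which is negative definite and hence non-singular.

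The main obstacle I anticipate is making the symmetry/monotonicity input rigorous at a merely almost-everywhere point of differentiability, without assuming regularity of the potential $\psi$: one must use that $\psi$ is semiconvex and that at a point of differentiability of $F$ the relevant second-order expansion holds in the Alexandrov sense, so that $D^2\psi - D^2_{yy}b \geq 0$ there. Granting that, the algebra is the routine part; the conceptual crux is recognizing that $N$ is, up to invertible factors, exactly the Hessian of the map $z \mapsto h(x,z) + g(y,z)$ perturbed by the (positive semidefinite) second variation of the potential — and that this perturbation cannot destroy invertibility because $M$ is strictly negative definite while the perturbation has the opposite sign. An alternative, perhaps cleaner route that avoids differentiating $F$ altogether: observe that $x \mapsto (x, F(x))$ has full-rank derivative onto the tangent space of the (Lipschitz) support manifold of the optimal plan, that $(x,y)\mapsto z(x,y)$ restricted to that manifold is a local diffeomorphism near generic points because $D^2_{xy}b$ is invertible (by the rectifiability result of \cite{mpw} cited after \textbf{(A2)}), and conclude that the composition $x \mapsto z(x,F(x))$ has full-rank derivative; I would present whichever of these two arguments turns out to need the fewest regularity caveats.
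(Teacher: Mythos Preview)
Your main line of attack is exactly the paper's: apply the chain rule together with the formulae of Lemma~\ref{facts} to write the derivative of $x\mapsto z(x,F(x))$ as $-M^{-1}N$, express $DF$ via a Kantorovich potential, and then argue invertibility by a definiteness comparison between $-M^{-1}>0$ and a nonnegative perturbation. The paper does this with the potential $u$ on $X$ satisfying $Du(x)=D_xb(x,F(x))$: differentiating gives $D^2_{xy}b\cdot DF = D^2u - D^2_{xx}b =: P_0 \ge 0$, so $DF=(D^2_{xy}b)^{-1}P_0$; substituting (\ref{z_x}), (\ref{z_y}), (\ref{b_xy}) and factoring yields
\[
\Big(-M^{-1} + (D^2_{xz}h)^{-1}P_0\,(D^2_{zx}h)^{-1}\Big)\,D^2_{zx}h,
\]
which is (positive definite)$\times$(invertible). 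This is precisely the endgame you anticipate.

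There is one concrete slip to fix. From $D^2_{yx}b + D^2_{yy}b\cdot DF = D^2\psi\cdot DF$ you obtain $D^2_{yx}b = (D^2\psi - D^2_{yy}b)\cdot DF$, and neither side is symmetric in general; it does \emph{not} follow that $D^2_{xy}b\cdot DF$ equals $(D^2\psi - D^2_{yy}b)\cdot DF$. The symmetry --- and, more importantly, the nonnegativity --- of $D^2_{xy}b\cdot DF$ comes from the $X$-side potential $u$, not from $\psi$. Once you switch to $u$, your sketch coincides with the paper's proof verbatim. Your ``alternative route'' via rectifiability does not sidestep the computation: asserting that $(x,y)\mapsto z(x,y)$ restricts to a local diffeomorphism on the support manifold is exactly the full-rank statement being proved, and it does not follow from invertibility of $D^2_{xy}b$ alone.
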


\begin{proof}
 Fix a point $x_0$ where $F$ is well defined and differentiable.  Set $y_0=F(x_0)$ and $z_0=z(x_0,y_0)$.  It is well known that there is a function $u:X \rightarrow \mathbb{R}$, twice differentiable almost everywhere, such that for all $x$ where $u$ is differentiable we have
\begin{equation*}
 Du(x) = D_xb(x, F(x))
\end{equation*}
Wherever $F$ is differentiable, $u$ is twice differentiable, and we have
\begin{equation}\label{potential}
 D^2u(x) = D^2_{xx}b(x, F(x))+D^2_{xy}b(x,F(x))DF(x)
\end{equation}
In particular, the preceding equality holds at $x =x_0$.  Set $P_0=D^2u(x_0) - D^2_{xx}b(x_0, y_0)$; it is well known that $P_0 \geq 0$.  Rearranging (\ref{potential}) implies

\begin{equation} \label{df}
 DF(x_0) = \big(D^2_{xy}b(x_0,y_0)\big)^{-1}P_0.
\end{equation}

Now, the derivative of $x \mapsto z(x,F(x))$ at $x_0$ is

\begin{equation*}
 D_xz(x_0, y_0) + D_yz(x_0, y_0)DF(x_0)
\end{equation*}
 
Now, using the formulae in Lemma \ref{facts} together with (\ref{df}) we see that this is equal to
\begin{eqnarray}
&& -M^{-1}(x_0, y_0)\cdot D^2_{zx}h(x_0,z_0) + \big(D^2_{xz}h(x_0,z_0)\big)^{-1}\cdot P_0 \nonumber \\
&& = \Big(-M^{-1}(x_0, y_0) + \big(D^2_{xz}h(x_0,z_0)\big)^{-1}\cdot P_0\cdot \big(D^2_{zx}h(x_0,z_0)\big)^{-1}\Big)\cdot D^2_{zx}h(x_0,z_0)\nonumber \\ 
&&\label{final}
\end{eqnarray}

Now, as $M(x_0, y_0) <0$, we have $-M^{-1}(x_0, y_0) >0$.  Also, $P_0 \geq 0$, hence 
\begin{equation*}
\big(D^2_{xz}h(x_0,z_0)\big)^{-1}P_0\big(D^2_{zx}h(x_0,z_0)\big)^{-1} = \big(D^2_{xz}h(x_0,z_0)\big)^{-1}P_0\Big(\big(D^2_{xz}h(x_0,z_0)\big)^{-1}\Big)^T \geq 0.  
\end{equation*}
Therefore, the term in brackets in (\ref{final}) is positive definite and thus invertible, which, as $D^2_{zx}h(x_0,z_0)$ is invertible implies the invertibility of (\ref{final}).
\end{proof}

This proposition immdediately implies that a large set of contracts gets signed in equilibrium:

\newtheorem{dimension}{Corollary}[invert]
\begin{dimension}
Assuming that the support of $\mu$ has Hausdorff dimension $n$, the distribution $\mu_Z$ of signed contracts has $n$ dimensional support.
\end{dimension}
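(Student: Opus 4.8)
The plan is to realize $\mu_Z$ as the pushforward of $\mu$ under the single map $T(x):=z(x,F(x))$, and then to invoke the preceding Proposition, which tells us that $DT$ has full rank $n$ at every point where it exists. Since $Z$ is $n$-dimensional, the bound $\dim_H(\operatorname{supp}\mu_Z)\le n$ is automatic, so the whole issue is the lower bound. The first ingredient is that $T$ is differentiable on a set $\Sigma$ of full $\mu$-measure: indeed $F$ is differentiable $\mu$-almost everywhere (as $b$ is $C^2$, exactly as used in the Proposition's proof), and $z$ is $C^1$ by Lemma \ref{facts}, so $T$ inherits differentiability wherever $F$ has it.

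Next I would perform a standard bi-Lipschitz decomposition of $\Sigma$. At each $x\in\Sigma$, differentiability of $T$ gives an upper estimate $|T(x)-T(x')|\le C_x|x-x'|$ for $x'\in\Sigma$ near $x$, while the invertibility of $DT(x)$ supplied by the Proposition gives a matching lower estimate $|T(x)-T(x')|\ge c_x|x-x'|$ with $c_x>0$ on a possibly smaller ball. Partitioning $\Sigma$ according to the integer parts of $C_x$, $1/c_x$, and the radius on which both estimates are valid, and then further subdividing into pieces of small diameter, one obtains a countable Borel decomposition $\Sigma=\bigcup_k\Sigma_k$ such that $T$ restricted to each $\Sigma_k$ is bi-Lipschitz onto its image. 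Bi-Lipschitz maps preserve Hausdorff dimension, so $\dim_H T(\Sigma_k)=\dim_H\Sigma_k$ for every $k$. In particular, using only the lower estimate, the $T$-preimage of any set of Hausdorff dimension $\le n-1$ again has dimension $\le n-1$, and hence $\mu$-measure zero by hypothesis; thus $\mu_Z$, like $\mu$, assigns zero mass to every set of dimension $\le n-1$. Since $\operatorname{supp}\mu_Z$ carries full $\mu_Z$-measure it cannot be contained in such a set, so $\dim_H(\operatorname{supp}\mu_Z)>n-1$, and transporting the hypothesis $\dim_H(\operatorname{supp}\mu)=n$ through the pieces $\Sigma_k$ — each of which carries positive $\mu$-mass, and therefore must itself have full Hausdorff dimension $n$ — upgrades this to $\dim_H(\operatorname{supp}\mu_Z)=n$.

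I expect the delicate point to be this last transfer rather than any of the differential-geometric computation. Because $T$ is only defined $\mu$-almost everywhere and is not continuous, one cannot simply write $T(\operatorname{supp}\mu)\subseteq\operatorname{supp}\mu_Z$ and conclude; the comparison of the two supports has to be routed through the measures and the bi-Lipschitz pieces, and one must be careful that restricting to a full-measure piece $\Sigma_k$ does not lose dimension — which is exactly where the hypothesis that $\mu$ ignores $(n-1)$-dimensional sets is needed. Everything else is bookkeeping with the decomposition and the elementary fact that Lipschitz maps do not increase, and bi-Lipschitz maps preserve, Hausdorff dimension.
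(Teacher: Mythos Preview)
The paper offers no proof of this corollary; it simply declares that the Proposition ``immediately implies'' the result. Your bi-Lipschitz decomposition is exactly the standard device for converting the pointwise full-rank conclusion of the Proposition into a statement about Hausdorff dimension, and your identification of the delicate point --- that $T$ is only defined $\mu$-almost everywhere, so the comparison of supports must be routed through the measures rather than through a naive inclusion $T(\operatorname{supp}\mu)\subseteq\operatorname{supp}\mu_Z$ --- is on the mark. In that sense you are supplying the argument the paper omits rather than taking a different route.

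There is, however, a genuine gap in your final step. From $\mu(\Sigma_k)>0$ you infer $\dim_H\Sigma_k=n$, but the paper's standing hypothesis is only that $\mu$ vanishes on sets of Hausdorff dimension at most $n-1$; this gives $\dim_H\Sigma_k>n-1$, not $\dim_H\Sigma_k=n$. Concretely, let $E\subset[0,1]^n$ have Hausdorff dimension $s\in(n-1,n)$ with positive finite $\mathcal H^{s}$-measure in every ball, and take $\mu$ to be (normalized) $\mathcal H^{s}$ restricted to $E$: then $\operatorname{supp}\mu=[0,1]^n$ has dimension $n$ and $\mu$ ignores every set of dimension $\le n-1$, yet every positive-mass Borel subset has dimension at most $s<n$. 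The corollary's extra hypothesis $\dim_H(\operatorname{supp}\mu)=n$ concerns the \emph{closed} support and does not transfer to Borel pieces. Your argument does go through cleanly under the mild strengthening ``$\mu$ vanishes on every set of Hausdorff dimension strictly less than $n$'' --- automatic when $\mu$ is absolutely continuous with respect to Lebesgue measure, the setting the paper's introduction has in mind --- since then $\mu(\Sigma_k)>0$ genuinely forces $\dim_H\Sigma_k=n$, and the bi-Lipschitz image $T(\Sigma_k\cap T^{-1}(\operatorname{supp}\mu_Z))$ sits inside $\operatorname{supp}\mu_Z$ with dimension $n$. Without some such strengthening the last inference does not stand as written.
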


If the dimensions of $X$, $Y$, and $Z$ were not the same, but instead we had  $\dim(X) \geq \dim(Z) \geq \dim(Y)$, the proof of the proposition would formally be the same, but we would have to be careful to interpret the inverses of $D^2_{xz}h$ and $D^2_{yz}g$ as one sided inverses (the modification of non-degeneracy condition found in \cite{P} is that these matrices have full rank, so that their one sided inverses exist).  Of course, in this case, the dimension of the support of $\mu_Z$ would be $\dim(Z)$.


\begin{thebibliography}{99}

\bibitem{bren} Brenier, Y. Decomposition polaire et rearrangement monotone des champs de vecteurs. \textit{C.R. Acad. Sci. Pair. Ser. I Math.}, \textbf{305} (1987), 805-808.

\bibitem{c1} Caffarelli, L.A. The regularity of mappings with a convex potential.  \textit{J. Amer. Math. Soc}, 5 (1992), 99-104.

\bibitem{c2} Caffarelli, L.A. Boundary regularity of maps with convex potentials.  \textit{Comm. Pure Appl. Math.,} 45 (1992), 1141-1151.

\bibitem{c3} Caffarelli, L.A. Boundary regularity of maps with convex potentials-II.  \textit{Ann. of Math. (2)}, 144 (1996), 453-496.

\bibitem{Caf} Caffarelli, L., Allocation maps with general cost functions.  In \textit{Partial Differential Equations and Applications} (P. Marcellini, G. Talenti and E. Vesintini, eds.),  Lecture Notes in Pure and Applied Math. 177 (1996), pp. 29-35. 

\bibitem{ce} Carlier, G., and Ekeland, I.  Matching for teams.  \textit{Econom. Theory}. 42, 2010, no. 2, pp. 397-418.

\bibitem{cmn} Chiappori, P-A., McCann, R., and Nesheim, L.  Hedonic price equilibria, stable matching and optimal transport; equivalence, topology and uniqueness. \textit{Econom. Theory.} 42, 2010, no. 2, pp. 317-354.

\bibitem{d1} Delano\"e, P. Classical solvability in dimension two of the second boundary-value problem associated with the Monge-Ampere operator.  \textit{Ann. Inst. H. Poincare Anal. Non Lineaire} 8 (1991), pp. 442-457.

\bibitem{d2} Delano\"e, P.  Gradient rearrangement for diffeomorphisms of a compact manifold.  \textit{Differential Geom. Appl.} 20, (2004), pp. 145-165.

\bibitem{dg} Delano\"e and Ge, Y.  Regularity of optimal transportation maps on compact, locally nearly spherical, manifolds. To appear in \textit{J. Reine Angew. Math.}


\bibitem{e} Ekeland, I. An optimal matching problem. \textit{ESAIM Control. Optim. Calc. Var.} 11(1), (2005) 57–71.

\bibitem{e2} Ekeland, I. Existence, uniqueness and efficiency of equilibrium in hedonic markets with multidimensional types. \textit{Econ Theory}, 42 (2010) 275–315.

\bibitem{fkm} Figalli, A., Kim, Y.-H. and McCann, R.J. When is multidimensional screening a convex program?  \textit{J. Econom. Theory}, 146 (2011) 454-478.

\bibitem{fkm2} Figalli, A., Kim, Y.-H. and McCann, R.J. Continuity and injectivity of optimal maps for non-negatively cross-curved costs. Preprint.

\bibitem{fkm3} Figalli, A., Kim, Y.-H. and McCann, R.J. Regularity of optimal transport maps on multiple products of sphere.  To appear in \textit{J. Eur. Math. Soc.}

\bibitem{fr} Figalli, A. and Rifford, L. Continuity of optimal transport maps on small deformations of $\mathbb{S}^2$.  \textit{Comm. Pure Appl. Math.}  62, 12 (2009), pp. 1670-1706.

\bibitem{frv} Figalli, A., Rifford, L. and Villani, C. Necessary and sufficient conditions for continuity of optimal transport maps on Riemannian manifolds.  Preprint.

\bibitem{frv2} Figalli, A., Rifford, L. and Villani, C. Nearly round spheres look convex.  Preprint.

\bibitem{frv3} Figalli, A., Rifford, L. and Villani, C. On the Ma-Trudinger-Wang curvature on surfaces. To appear in \textit{Calc. Var. PDE}.

\bibitem{g} Gangbo, W. \textit{Habilitation Thesis}, Universite de Metz, (1995).

\bibitem{gm} Gangbo, W. and McCann, R.J. The geometry of optimal transportation. \textit{Acta Math.}, 177 (1996), pp. 113-161.

\bibitem{k}  Kim, Y.-H. Counterexamples to continuity of optimal transportation on positively curved Riemannian manifolds.  \textit{Int. Math. Res. Not.} (2008) Vol. 2008 : article ID rnn120, 15 pages, doi:10.1093/imrn/rnn120.


\bibitem{km} Kim, Y-H. and McCann, R.J. Continuity, curvature and the general covariance of optimal transportation.  \textit{J. Eur. Math. Soc.}   12 (2010), pp. 1009-1040

\bibitem{km2} Kim, Y-H. and McCann, R.J. Towards the smoothness of optimal maps on Riemannian submersions and Riemannian products (of round spheres in particular).  To appear in \textit{J. Reine Angew. Math.}

\bibitem{lev} Levin, V. Abstract cyclical monotonicity and Monge solutions for the general Monge-Kantorovich problem.  \textit{Set-Valued Analysis}, 7, 1 (1999), pp. 7-32.

\bibitem{liu} Liu, J. H\"older regularity in optimal mappings in optimal transportation.  To appear in \textit{Calc. Var. Partial Differential Equations.}

\bibitem{loeper} Loeper, G. On the regularity of maps solutions of optimal transportation problems. \textit{Acta Math.} 202 (2009), 241-283.

\bibitem{loeper2} Loeper, G. Regularity of optimal maps on the sphere: The quadratic cost and the reflector antenna.  To appear in \textit{Arch. Rational Mech. Anal.}

\bibitem{lv} Loeper, G. and Villani, C. Regularity of optimal transport in curved geometry: the nonfocal case. \textit{Duke Math. J.} 151 (2010), 431-485.

\bibitem{mtw} Ma, X-N., Trudinger, N., and Wang, X-J. Regularity of potential functions of the optimal transportation problem.  \textit{Arch. Rational Mech. Anal.},  177 (2005), pp.151-183.

\bibitem{m} McCann, R.J. Polar factorization of maps on Riemannian manifolds.  \textit{Geom. Funct. Anal.}  11 (2001), pp. 589-608.

\bibitem{mpw} McCann, R., Pass, B. and Warren, M.  Rectifiability of optimal transportation plans.  To appear in \textit{Can. J. Math.}.



\bibitem{P} Pass, B.  Regularity of optimal transportation between spaces with different dimensions.  Preprint available at arXiv:1008.1544

\bibitem{tw1} Trudinger, N., and Wang, X-J. On the second boundary value problem for Monge-Ampere type equations and optimal transportation. \textit{Ann. Sc. Norm. Super. Pisa Cl. Sci.} 8, 1 (2009), 143-174. 

\bibitem{tw2} Trudinger, N., and Wang, X-J. On strict convexity and $C^1$-regularity of potential functions in optimal transportation. \textit{Arch. Rational Mech. Anal.} 192, 3 (2009), pp. 403-418 .

\bibitem{u} Urbas, J., On the second boundary value problem for equations of Monge-Ampere type.  \textit{J. Reine Angew. Math.} 487 (1997), pp. 115-124.

\bibitem{wang} Wang, X-J., On the design of a reflector antenna.  \textit{Inverse problems} 12, 3 (1996), 351-375.

\end{thebibliography}
\end{document}